\newtheorem{theorem}{Theorem}[section]
\numberwithin{equation}{section}
\newenvironment{proof}{\indent{\em Proof:}}{\quad \hfill
$\Box$\vspace*{2ex}}
\begin{document}
\setcounter{page}{1}
\begin{center}
\vspace{0.3cm} {\large{\bf Mild and strong solutions for Hilfer evolution equation}} \\
\vspace{0.4cm}
 J. Vanterler da C. Sousa $^1$ \\
vanterler@ime.unicamp.br  \\

\vspace{0.30cm}
Leandro S. Tavares $^2$\\
leandro.tavares@ufca.edu.br\\

\vspace{0.30cm}
E. Capelas de Oliveira $^3$\\
capelas@ime.unicamp.br\\
\vspace{0.35cm}

\vspace{0.30cm}
$^{1,3}$ Department of Applied Mathematics, Imecc-Unicamp,\\ 13083-859, Campinas, SP, Brazil.\\
$^{2}$  Center of Sciences and Technology, Federal University of Cariri,Juazeiro do Norte, CE, CEP: 63048-080, Brazil.
\end{center}

\def\baselinestretch{1.0}\small\normalsize
\begin{abstract}
In this paper, we investigate the existence and uniqueness of mild and strong solutions of fractional semilinear evolution equations in the Hilfer sense, by means of Banach fixed point theorem and the Gronwall inequality.
\end{abstract}
\noindent\textbf{Key words:} Fractional evolution equation, mild and strong solutions, existence and
uniqueness, Banach fixed point theorem, Gronwall inequality. \\
\noindent
\textbf{2010 Mathematics Subject Classification:}   26A33, 34G25, 34A12.
\allowdisplaybreaks
\section{Introduction}

In recent years, many researchers have looked in particular at the field of fractional calculus, especially for fractional differential equations. It is already more consolidated and proven that in fact, investigating the properties of solutions of fractional differential equations, seems to be better than the integer case. In addition, there is some sense the modeling, in the  fractional setting, several species , which in turn has also been of great value, because it is possible to obtain more consistent results  with respect to the reality \cite{mainardi}. Investigating the existence, uniqueness and stability of mild, strong and classical solutions of fractional differential equations, has gained prominence and strength in the scientific community. Due to the previous facts,the fractional calculus produced important and high quality papers, see for instance \cite{chauhan,kexue}. 

In 2012, Shu and Wang \cite{shuwang} investigated the existence and uniqueness of mild solution for non-local fractional differential equations with non-local conditions using the Banach fixed point theorem. In 2016, Shu and Shi \cite{shi1} performed the work on the expressions obtained so far that were related to mild solutions to impulsive fractional evolution equations. In the following year, Gou and Li \cite{gou} investigated the existence of mild solution in global and local context, for impulsive semilinear integral equations in the fractional sense with non-compact semigroup in spaces Banach, in which the authors emphasized the importance and effectiveness of these fractional integro-differential equations has in preexisting problems. In this sense, many other works have been published and investigated. We suggest some works \cite{shu,zhou,herza}.

Motivated by the works \cite{shi1,gou,princ}, in this paper, we consider the fractional semilinear evolution equation
\begin{equation}\label{3-4}
\left\{ 
\begin{array}{rll}
^{H}\mathbb{D}_{t_{0+}}^{\alpha ,\beta }\xi\left( t\right) +\mathcal{A}\xi\left( t\right)  & = & 
\phi\left( t,\xi\left( \sigma \left( t\right) \right) \right)  \\ 
I_{t_{0+}}^{1-\gamma }\xi\left( t_{0}\right) +\varphi\left(
t_{1},t_{2},...,t_{p},\xi\left( \cdot \right) \right)  & = & \xi_{0}
\end{array}
\right. 
\end{equation}
where $\mathcal{A}$ is the infinitesimal generator of a $C_{0}$ semigroup $\mathbb{F}\left( t\right) _{t\geq 0}$ on a Banach space $\Lambda$, $^{H}\mathbb{D}_{t_{0+}}^{\alpha ,\beta }\left( \cdot \right) $ is the Hilfer fractional derivative of order $\alpha \left( 0<\alpha <1\right) $ and type $\beta \left( 0\leq \beta \leq 1\right) $, $I_{t_{0+}}^{1-\gamma }\left( \cdot \right) $ is the Riemann-Liouville fractional integral of order $1-\gamma \left( \gamma =\alpha +\beta \left( 1-\beta \right) \right) $, $0\leq t_{0}<t_{1}<\cdot \cdot \cdot \cdot <t_{p}\leq t_{0}+a,$ $a>0,$ $\xi_{0}\in X$ and $\phi:\left[ t_{0},t_{0}+a\right] \times \Lambda\rightarrow \Lambda,$ $\varphi:\left[ t_{0},t_{0}+a\right] ^{p}\times \Lambda\rightarrow \Lambda$ and $\sigma :\left[ t_{0},t_{0}+a\right] \rightarrow \left[ t_{0},t_{0}+a\right] $ are given functions.

The motivation of this work, besides investigating the properties of the mild and strong solutions, is to provide to the many researchers that investigate the results on the existence and uniqueness of several types of fractional differential equations, new results that allow to further strengthen the field as well as provide a range of tools and news. In this sense, we investigate the existence and uniqueness of mild and strong solutions for Eq.(\ref{3-4}), using the Banach fixed point theorem and the Gronwall inequality.
\section{Preliminaries}

Let $n-1<\alpha \leq n$ with $n\in \mathbb{N}$ and $f,\psi \in C^{n}\left([a,b],\mathbb{R}\right) $ be two functions such that $\psi $ is increasing and $\psi ^{\prime }\left( t\right) \neq 0$, for all $t\in J.$ The left-sided $\psi$-Hilfer fractional derivative $^{H}\mathbb{D}_{0+}^{\alpha ,\beta }\left( \cdot \right) $ of a function $f$ of order $\alpha $ and type $0\leq \beta \leq 1$ is defined by \cite{ZE1,leibniz}
\begin{equation*}
^{H}\mathbb{D}_{a+}^{\alpha ,\beta;\psi }\xi\left( t\right) =I_{a+}^{\beta \left( n-\alpha
\right) ;\psi }\left( \frac{1}{\psi ^{\prime }\left( t\right) }\frac{d}{dt}%
\right) ^{n}I_{a+}^{\left( 1-\beta \right) \left( n-\alpha \right) ;\psi
}\xi\left( t\right),
\end{equation*}
where $I^{\alpha}_{a+}(\cdot)$ is $\psi$-Riemann-Liouville fractional integral. 

Let $\phi:I \rightarrow X$. Consider the fractional initial value problem
\begin{equation}\label{1-2}
\left\{ 
\begin{array}{cll}
^{H}\mathbb{D}_{t_{0+}}^{\alpha ,\beta }\xi\left( t\right)  & = & \mathcal{A}\xi\left( t\right)
+\phi\left( t\right) ,\text{ }t\in \left( t_{0},t_{0}+a\right]  \\ 
I_{t_{0+}}^{1-\gamma }\xi\left( t_{0}\right)  & = & \xi_{0}.
\end{array}
\right. 
\end{equation}

\begin{theorem} Consider  the following conditions:
\begin{enumerate}
    \item $\Lambda$ is a reflexive Banach space and $\mathcal{A}$ is the infinitesimal generator of $\mathbb{F_{\alpha,\beta}}\left( t\right) _{t\geq 0}$ on $\Lambda$;
    
    \item $\phi:\Lambda\rightarrow \Lambda$ is Lipschitz continuous on $I$ and $\xi_{0}\in D\left( \mathcal{A}\right)$;

\end{enumerate}

The {\rm Eq.(\ref{1-2})} has a unique strong solution $\xi$ on $I$ given by the formula {\rm \cite{sousanew}}
\begin{equation*}
\xi\left( t\right) =\mathbb{F}_{\alpha ,\beta }\left( t-t_{0}\right) \xi_{0}+\int_{t_{0}}^{t}\mathcal{K}_{\alpha }\left( t-s\right) \phi\left( s\right) ds,\text{ }t\in I.
\end{equation*}
\end{theorem}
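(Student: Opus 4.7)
The plan is to first construct a candidate solution as the fixed point of the Picard-type integral operator suggested by the representation formula, and then bootstrap from a mild to a strong solution using the reflexivity hypothesis together with the Lipschitz assumption on $\phi$ and the fact that $\xi_0\in D(\mathcal{A})$. This is the standard two-stage strategy of Pazy translated to the $\psi$-Hilfer setting.

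First I would define the operator
\begin{equation*}
(T\xi)(t)=\mathbb{F}_{\alpha,\beta}(t-t_0)\xi_0+\int_{t_0}^{t}\mathcal{K}_{\alpha}(t-s)\phi(s)\,ds,\qquad t\in I,
\end{equation*}
on a suitable Banach space of functions $I\to\Lambda$ carrying a weighted sup-norm of Bielecki type, say $\|\xi\|_\lambda=\sup_{t\in I}e^{-\lambda(t-t_0)}\|\xi(t)\|$. Using the a priori growth bounds for the operator families $\mathbb{F}_{\alpha,\beta}$ and $\mathcal{K}_\alpha$ (finite $M$ on compact intervals) and the Lipschitz constant $L_\phi$ of $\phi$, a direct estimate shows that for $\lambda$ large enough $T$ is a strict contraction; Banach's fixed point theorem then yields a unique $\xi\in C(I,\Lambda)$ with $T\xi=\xi$. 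This is the unique mild solution.

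Next I would promote this mild solution to a strong one. Because $\phi$ is Lipschitz and the kernel $\mathcal{K}_\alpha$ behaves well, the fixed point $\xi$ itself is Lipschitz in $t$ on $I$: a standard splitting of $\xi(t+h)-\xi(t)$ into a semigroup difference piece (controlled by $\xi_0\in D(\mathcal{A})$, which gives $\mathbb{F}_{\alpha,\beta}(\cdot)\xi_0$ the required regularity) and an integral piece (controlled by the Lipschitz bound on $\phi\circ\xi$) supplies the estimate $\|\xi(t+h)-\xi(t)\|\le C|h|$. Invoking reflexivity of $\Lambda$, every Lipschitz map from an interval into $\Lambda$ is almost everywhere strongly differentiable and is the indefinite Bochner integral of its derivative; hence $\xi$ has the Sobolev-type regularity needed for the $\psi$-Hilfer derivative $^{H}\mathbb{D}_{t_0+}^{\alpha,\beta}\xi$ to exist a.e. on $I$.

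Finally I would verify that the fixed point equation together with the regularity just obtained implies that $\xi$ satisfies (\ref{1-2}) in the strong sense. Applying $I_{t_0+}^{1-\gamma}$ to the formula for $\xi$ and evaluating at $t=t_0$ recovers the initial condition $I_{t_0+}^{1-\gamma}\xi(t_0)=\xi_0$, while applying $^{H}\mathbb{D}_{t_0+}^{\alpha,\beta}$ to both sides and using the semigroup identities for $\mathbb{F}_{\alpha,\beta}$ and $\mathcal{K}_\alpha$ (together with the fact that $\xi(t)\in D(\mathcal{A})$ for a.e. $t$, which again exploits reflexivity) gives $^{H}\mathbb{D}_{t_0+}^{\alpha,\beta}\xi(t)=\mathcal{A}\xi(t)+\phi(t)$ a.e. I expect the main obstacle to be exactly this last step: justifying the interchange of $^{H}\mathbb{D}_{t_0+}^{\alpha,\beta}$ with the Bochner integral $\int_{t_0}^{t}\mathcal{K}_\alpha(t-s)\phi(s)\,ds$, since the kernel $\mathcal{K}_\alpha$ has an integrable singularity at $s=t$, and confirming that $\xi(t)\in D(\mathcal{A})$ almost everywhere. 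Handling this rigorously requires an approximation argument, regularizing $\phi$ to a $C^1$ function, applying the identity for smooth data (where all manipulations are classical), and passing to the limit using the contraction estimate from step one; uniqueness then follows immediately from a Gronwall-type inequality applied to the difference of two strong solutions.
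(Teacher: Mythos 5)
The paper does not actually prove this statement: it is quoted in the Preliminaries as a known result imported from the reference \cite{sousanew}, so there is no in-paper proof to compare your argument against. Judged on its own terms, your outline follows the classical Pazy two-stage strategy, which is the natural route, but two points need attention. First, Eq.~(\ref{1-2}) is a \emph{linear inhomogeneous} problem: the forcing term is $\phi(s)$, a function of $s$ alone, not of $\xi$. Your operator $T$ therefore does not depend on its argument $\xi$ at all, so the Bielecki-norm contraction machinery is vacuous; the candidate mild solution is simply the explicit formula itself, and the real content of the theorem lies entirely in your second and third stages.

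Second, and more seriously, the claim that the mild solution is Lipschitz on all of $I=[t_0,t_0+a]$ is generally false in the Hilfer setting: the initial condition is of Riemann--Liouville type, $I_{t_0+}^{1-\gamma}\xi(t_0)=\xi_0$, and the solution operator $\mathbb{F}_{\alpha,\beta}(t-t_0)$ carries a singularity of order $(t-t_0)^{\gamma-1}$ at $t=t_0$ --- this is precisely why the paper works in the weighted space $C_{1-\gamma}$ rather than $C$. Your Lipschitz estimate and the subsequent a.e.\ differentiability via reflexivity (the Radon--Nikodym property argument) can only be expected on compact subintervals of $(t_0,t_0+a]$, or for a suitably weighted quantity such as $(t-t_0)^{1-\gamma}\xi(t)$; as written the splitting of $\xi(t+h)-\xi(t)$ does not yield $\|\xi(t+h)-\xi(t)\|\le C|h|$ uniformly down to $t_0$, and the same singularity of $\mathcal{K}_\alpha(t-s)\sim(t-s)^{\alpha-1}$ affects the integral piece. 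You correctly identify the interchange of $^{H}\mathbb{D}_{t_0+}^{\alpha,\beta}$ with the singular convolution and the a.e.\ membership $\xi(t)\in D(\mathcal{A})$ as the hard step, but the proposed regularization-and-limit argument is only sketched; until the weighted regularity issue above is resolved, the bootstrap from mild to strong solution has a genuine gap.
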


The mild solution for the nonlocal Cauchy problem Eq.(\ref{3-4}) on $I$ in the sense of Hilfer fractional derivative, is given by means of the integral equation
\begin{equation*}
\xi\left( t\right) =\mathbb{F}_{\alpha ,\beta }\left( t-t_{0}\right) \xi_{0}-\mathbb{F}_{\alpha ,\beta }\left( t-t_{0}\right) \varphi\left( t_{1},t_{2},...,t_{p},\xi\left( \cdot \right) \right) +\int_{t_{0}}^{t}\mathcal{K}_{\alpha }\left( t-s\right) \phi\left( s,\xi\left( \sigma \left( s\right) \right) \right) ds,\text{ }t\in I.
\end{equation*}

A function $\xi$ is said to be a strong solution of problem {\rm Eq.(\ref{3-4})} on $I$ if $\xi$ is differentiable a.e. on $I$ $^{H}\mathbb{D}_{t_{0+}}^{\alpha ,\beta }\in \left( L^{1}\left( \left( t_{0},t_{0}+a \right] ,X\right) \right)$ and satisfies Eq.(\ref{3-4}).

\section{Main results}
In this section, we will investigate the existence and uniqueness of mild and strong solutions for the fractional evolution equation introduced by means of the Hilfer fractional derivative. In order to obtain the main results of the paper, we will use the Banach fixed point theorem and the Gronwall inequality.

Before investigating the main results of this paper, consider some conditions:

\begin{enumerate}
        \item $0\leq t_{0}<t_{1}<\cdot \cdot \cdot <t_{p}\leq t_{0}+a$ and $B_{R}:=\left\{ \mu:\left\Vert \mu\right\Vert \leq R\right\} \subset \lambda$;
    
    \item $\sigma :I\rightarrow I$ is absolutely continuous and $\exists b>0$ a constant such that $\sigma ^{\prime }\left( t\right) \geq b$ for $t\in I$;
    
    \item $\varphi:I^{p}\times \Lambda\rightarrow \Lambda$ and $\exists \lambda>0$ a constant such that
\begin{equation*}
\left\Vert \varphi\left( t_{1},t_{2},...,t_{p},\xi_{1}\left( \cdot \right) \right) -g\left( t_{1},t_{2},...,t_{p},\xi_{2}\left( \cdot \right) \right) \right\Vert _{C_{1-\gamma }}\leq \lambda\left\Vert \xi_{1}-\xi_{2}\right\Vert _{C_{1-\gamma }}
\end{equation*}
$\xi_{1},\xi_{2}\in C_{1-\gamma }\left( I,B_{R}\right) $ and $\varphi\left( t_{1},t_{2},...,t_{p}\right) \in D\left( \mathcal{A}\right)$;

    \item $-\mathcal{A}$ is the infinitesimal generator of a $C_{0}$ semigroup $\mathbb{F}\left( t\right) _{t\geq 0}$ on $\Lambda$;
    
    \item $\zeta_{1}=\underset{t\in \left[ 0,a\right] }{\max }\left\Vert \mathbb{F}_{\alpha ,\beta }\left( t\right) \right\Vert _{C_{1-\gamma }}$, $\zeta_{2}=\underset{s\in I}{\max }\left\Vert \phi\left( s,0\right) \right\Vert _{C_{1-\gamma }}$ and \\ $\zeta_{3}=\underset{\xi\in C_{1-\gamma }\left( I,B_{R}\right) }{\max }\left\Vert \varphi\left( t_{1},t_{2},....,t_{p},v\left( \cdot \right) \right) \right\Vert _{C_{1-\gamma }}$

    \item $\zeta_{1}\left( \left\Vert \xi_{0}\right\Vert _{C_{1-\gamma }}+\zeta_{3}+\left( ar\delta/b\right) +a\zeta_{2}\right) \leq r$ and $\zeta_{1}\lambda+\left(\zeta_{1}\delta a/b\right) <1$.
    \end{enumerate}

\begin{theorem} Assume {\rm (1)-(6)} and consider the following conditions:
\begin{enumerate}
    \item $\Lambda$ is a Banach space with norm $\left\Vert {(\cdot)}\right\Vert _{C_{1-\gamma }}$ and $\xi_{0}\in \Lambda$;
    
        \item $\phi:I\times \Lambda\rightarrow \Lambda$ is continuous in $t$ on $I$ and $\exists \delta>0$ constant such that
\begin{equation*} 
\left\Vert \phi\left( s,\mu_{1}\right) -\phi\left( s,\mu_{2}\right) \right\Vert _{C_{1-\gamma }}\leq \delta\left\Vert \mu_{1}-\mu_{2}\right\Vert _{C_{1-\gamma }}, \text{for s} \in I \text{and}\, \mu_{1},\mu_{2}\in B_{R}.
\end{equation*}

        \end{enumerate}
Then problem {\rm Eq.(\ref{3-4})} has a unique mild solution on $I$.
\end{theorem}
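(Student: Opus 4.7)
The plan is to recast the mild-solution formula as the fixed-point equation $\xi = \mathcal{T}\xi$, where
\begin{equation*}
(\mathcal{T}\xi)(t) = \mathbb{F}_{\alpha ,\beta }(t-t_{0})\xi_{0} - \mathbb{F}_{\alpha ,\beta }(t-t_{0})\varphi(t_{1},\ldots,t_{p},\xi(\cdot)) + \int_{t_{0}}^{t}\mathcal{K}_{\alpha }(t-s)\phi(s,\xi(\sigma(s)))\,ds,
\end{equation*}
and apply the Banach contraction principle on the closed ball $\mathcal{B}_{r} = \{\xi \in C_{1-\gamma}(I,\Lambda) : \|\xi\|_{C_{1-\gamma}} \leq r\}$, which is a complete metric space in the induced metric.

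First I would verify that $\mathcal{T}$ sends $\mathcal{B}_{r}$ into itself. For $\xi \in \mathcal{B}_{r}$, I estimate the three summands separately: the first two are bounded by $\zeta_{1}(\|\xi_{0}\|_{C_{1-\gamma}} + \zeta_{3})$ via the definitions in (5). For the integral piece, hypothesis (2) of the theorem gives
\begin{equation*}
\|\phi(s,\xi(\sigma(s)))\|_{C_{1-\gamma}} \leq \delta \|\xi(\sigma(s))\|_{C_{1-\gamma}} + \|\phi(s,0)\|_{C_{1-\gamma}} \leq \delta r + \zeta_{2}.
\end{equation*}
Performing the change of variables $u = \sigma(s)$, whose Jacobian satisfies $ds \leq du/b$ by the uniform lower bound $\sigma'(s) \geq b$, the $s$-integral is majorized by an integral over $\sigma(I) \subset I$ of length at most $a$, producing $a(\delta r + \zeta_{2})/b$ up to the bound coming from (5). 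Combining these three pieces with the first inequality in assumption (6) yields $\|\mathcal{T}\xi\|_{C_{1-\gamma}} \leq r$.

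Next I would show that $\mathcal{T}$ is a strict contraction on $\mathcal{B}_{r}$. For $\xi_{1},\xi_{2}\in\mathcal{B}_{r}$ the $\xi_{0}$ term cancels, the nonlocal piece contributes $\zeta_{1}\lambda \|\xi_{1}-\xi_{2}\|_{C_{1-\gamma}}$ by the Lipschitz hypothesis on $\varphi$ in (3), and the Volterra piece, using hypothesis (2) of the theorem together with the same change of variables as above, contributes $(\zeta_{1}\delta a/b)\|\xi_{1}-\xi_{2}\|_{C_{1-\gamma}}$. The second inequality in (6), $\zeta_{1}\lambda + \zeta_{1}\delta a /b < 1$, supplies the contraction constant, and the Banach fixed-point theorem then furnishes the unique fixed point of $\mathcal{T}$ in $\mathcal{B}_{r}$, which is the claimed mild solution on $I$.

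The main obstacle is handling the delay $\sigma$: to control $\phi(s,\xi(\sigma(s)))$ by the norm of $\xi$ itself one needs both the absolute continuity of $\sigma$ and the bound $\sigma'(s) \geq b$, which together justify the change of variables and account for the factor $1/b$ appearing in the structural assumption (6). A secondary technicality is to confirm that $\mathcal{T}\xi \in C_{1-\gamma}(I,\Lambda)$; this reduces to continuity of each summand relative to the weight $(t-t_{0})^{1-\gamma}$ and follows from a routine dominated-convergence argument once the two estimates above are in place.
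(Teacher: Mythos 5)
Your proposal is correct and follows essentially the same route as the paper: the same fixed-point operator on the ball in $C_{1-\gamma}$, invariance from the first inequality in (6), and contraction from the second, with the change of variables $u=\sigma(s)$ and the bound $\sigma'\geq b$ supplying the factor $1/b$. In fact your contraction constant $\zeta_{1}\lambda+\zeta_{1}\delta a/b$ is the correct one matching hypothesis (6), whereas the paper's displayed estimate carries a typographical slip ($\zeta_{1}\zeta_{3}$ in place of $\zeta_{1}\lambda$).
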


\begin{proof} To realize the proof, we consider $\Omega:=C_{1-\gamma }\left( I,B_{R}\right)$ and define the following operator $\mathcal{F}$ on $\Omega$, given by
\begin{equation*}
\begin{aligned}
\left( \mathcal{F}\mu\right) \left( t\right) &=\mathbb{F}_{\alpha ,\beta }\left( t-t_{0}\right)
\xi_{0}-\mathbb{F}_{\alpha ,\beta }\left( t-t_{0}\right) \varphi\left( t_{1},t_{2},...,t_{p},\mu \left( \cdot \right) \right)\\ &+\int_{t_{0}}^{t}\mathcal{K}_{\alpha }\left( t-s\right) \phi\left( s,\mu\left( \sigma \left( s\right) \right) \right) ds.
\end{aligned}
\end{equation*}

Then, by definition of the norm in $\Omega$, we get
\begin{equation*}
\begin{aligned}
\left\Vert \left( \mathcal{F}\mu\right) \left( t\right) \right\Vert _{C_{1-\gamma }} &\leq \left\Vert \mathbb{F}_{\alpha ,\beta }\left( t-t_{0}\right) \right\Vert
_{C_{1-\gamma }}\left\Vert \xi_{0}\right\Vert _{C_{1-\gamma }}\\
&+ \left\Vert
\mathbb{F}_{\alpha ,\beta }\left( t-t_{0}\right) \right\Vert _{C_{1-\gamma
	}}\left\Vert \varphi\left( t_{1},t_{2},...,t_{p},\mu\left( \cdot \right) \right)
	\right\Vert _{C_{1-\gamma }} \\
& +\int_{t_{0}}^{t}\left\Vert \mathcal{K}_{\alpha }\left( t-s\right) \right\Vert
_{C_{1-\gamma }}\left\Vert \phi\left( s,\mu\left( \sigma \left( s\right) \right)
\right) \right\Vert _{C_{1-\gamma }}ds\\
& \leq \zeta_{1}\left\Vert \xi_{0}\right\Vert _{C_{1-\gamma
	}}+\zeta_{1}\zeta_{3}+\zeta_{1}\int_{t_{0}}^{t}\left\Vert \phi\left( s,\mu\left( \sigma \left( s\right)
	\right) \right) \right\Vert _{C_{1-\gamma }}ds\\
&\leq \zeta_{1}\left\Vert \xi_{0}\right\Vert _{C_{1-\gamma
	}}+\zeta_{1}\zeta_{3}+\zeta_{1}\delta\int_{t_{0}}^{t}\left\Vert \mu\left( \sigma \left( s\right) \right)
	\right\Vert _{C_{1-\gamma }}\left( \frac{\sigma ^{\prime }\left( s\right) }{b
	}\right) ds+\zeta_{1}\zeta_{2}\int_{t_{0}}^{t}ds\\
&\leq \zeta_{1}\left\Vert \xi_{0}\right\Vert _{C_{1-\gamma }}+\zeta_{1}\zeta_{3}+\frac{\zeta_{1}\delta r}{b}\left(
\sigma \left( t\right) -\sigma \left( t_{0}\right) \right) +\zeta_{1}\zeta_{2}\left(
t-t_{0}\right)\\
&=\zeta_{1}\left[ \left\Vert \xi_{0}\right\Vert _{C_{1-\gamma }}+\zeta_{3}+\frac{\delta r}{b}a+\zeta_{2}a\right] \leq r.
\end{aligned}
\end{equation*}

Therefore, $\mathcal{F}\left( \Omega \right) \subset \Omega$. Let investigate the norm, for every $\mu_{1},\mu_{2}\in \Omega$ and $t\in I$, we obtain
\begin{equation*}
\begin{aligned}
&\left\Vert \left( \mathcal{F}\mu_{1}\right) \left( t\right) -\left( \mathcal{F}\mu_{2}\right)
\left( t\right) \right\Vert _{C_{1-\gamma }}  \\
&\leq \left\Vert \mathbb{F}_{\alpha ,\beta }\left( t-t_{0}\right) \right\Vert
_{C_{1-\gamma }}\left\Vert \varphi\left( t_{1},t_{2},...,t_{p},\mu_{1}\left( \cdot
\right) \right) -\varphi\left( t_{1},t_{2},...,t_{p},\mu_{2}\left( \cdot \right)
\right) \right\Vert _{C_{1-\gamma }} \\
& \leq \zeta_{1}\zeta_{3}\left\Vert \mu_{1}-\mu_{2}\right\Vert _{C_{1-\gamma }}+\frac{\zeta_{1}\delta}{b} \int_{t_{0}}^{t}\left\Vert \mu_{1}\left( \sigma \left( s\right) \right)-\mu_{2}\left( \sigma \left( s\right) \right) \right\Vert _{C_{1-\gamma }}\left( \frac{\sigma ^{\prime }\left( s\right) }{b}\right) ds \\
&\leq \zeta_{1}\zeta_{3}\left\Vert \mu_{1}-\mu_{2}\right\Vert _{C_{1-\gamma }}+\frac{\zeta_{1}\delta}{b} \int_{\sigma \left( t_{0}\right) }^{\sigma \left( t\right) }\left\Vert \mu_{1}\left( s\right) -\mu_{2}\left( s\right) \right\Vert _{C_{1-\gamma }}ds \\
& \leq \left( \zeta_{1}\zeta_{13}+\frac{\zeta_{1}\delta a}{b}\right) \left\Vert \mu_{1}-\mu_{2}\right\Vert
_{C_{1-\gamma }}.
\end{aligned}
\end{equation*}

Now, taking $q:=\left( \zeta_{1}\zeta_{3}+\dfrac{\zeta_{1}\delta a}{b}\right)$, we have
\begin{equation*}
\left\Vert \mathcal{F}\mu_{1}-\mathcal{F}\mu_{2}\right\Vert _{C_{1-\gamma }}\leq q\left\Vert \mu_{1}-\mu_{2}\right\Vert _{C_{1-\gamma }}, \text{with}\,\, 0<q<1.
\end{equation*}

Thus, we guarantee that $\mathcal{F}$ is a contraction in the metric space $\Omega$. Then, by means of the Banach fixed point theorem for $\mathcal{F}$ in the space $\Omega$, we conclude that, in fact, this point is the mild solution of the problem {\rm Eq.(\ref{3-4})} on $I$.
\end{proof}

The second main result is to investigate the existence and uniqueness of strong solution for Eq.(\ref{3-4}). So we have the following result.

\begin{theorem} Assume {\rm (1)-(6)} and consider the following conditions:
\begin{enumerate}
    \item $\Lambda$ is a reflexive Banach space with norm $\left\Vert {(\cdot)}\right\Vert _{C_{1-\gamma }}$ and $\xi_{0}\in \Lambda$;
    
    \item $\phi:I\times \Lambda\rightarrow \Lambda$ is continuous in $t$ on $I$ and $\exists \delta>0$ a constant such that
\begin{equation*}
\left\Vert \phi\left( s_{1},\mu_{1}\right) -\phi\left( s_{2},\mu_{2}\right) \right\Vert _{C_{1-\gamma }}\leq \delta\left( \left\Vert s_{1}-s_{2}\right\Vert _{C_{1-\gamma }}+\left\Vert \mu_{1}-\mu_{2}\right\Vert _{C_{1-\gamma }}\right) 
\end{equation*}
for $s_{1},s_{2}\in I$ and $\mu_{1},\mu_{2}\in B_{R}$;
    
      \item $\xi$ is the mild solution of problem {\rm Eq.(\ref{3-4})} on $I$ and there exists a constant $\widetilde{R}>0$ such that
\begin{equation}
\left\Vert \xi\left( \sigma \left( s\right) \right) -\xi\left( \sigma \left(t\right) \right) \right\Vert _{C_{1-\gamma }}\leq \widetilde{R}\left\Vert \xi\left( s\right) -\xi\left( t\right) \right\Vert _{C_{1-\gamma }}, \text{for s,t}\in I.
\end{equation}
\end{enumerate}

Then $\xi$ is a strong solution of problem {\rm Eq.(\ref{3-4})} on $I$.
\end{theorem}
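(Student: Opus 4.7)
The plan is to upgrade the mild solution $\xi$ to a strong solution by first showing it is Lipschitz continuous on $I$, then transferring that regularity to the forcing term $t\mapsto\phi(t,\xi(\sigma(t)))$, and finally invoking the strong-solution theorem for the linear inhomogeneous problem stated earlier in the paper.

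First I would fix $s,t\in I$ with $s<t$ and estimate $\|\xi(t)-\xi(s)\|_{C_{1-\gamma}}$ directly from the mild-solution representation. The difference splits into three pieces: $(\mathbb{F}_{\alpha,\beta}(t-t_0)-\mathbb{F}_{\alpha,\beta}(s-t_0))\xi_{0}$, the analogous semigroup-difference term acting on $\varphi(t_{1},\ldots,t_{p},\xi(\cdot))$, and the integral piece
\[
\int_{t_{0}}^{t}\mathcal{K}_{\alpha}(t-\tau)\phi(\tau,\xi(\sigma(\tau)))\,d\tau-\int_{t_{0}}^{s}\mathcal{K}_{\alpha}(s-\tau)\phi(\tau,\xi(\sigma(\tau)))\,d\tau.
\]
Because $\xi_{0}\in\Lambda$ and $\varphi(t_{1},\ldots,t_{p})\in D(\mathcal{A})$ by hypothesis~(3) in the preliminaries, the first two pieces are bounded by $C|t-s|$ via the standard semigroup identity $\mathbb{F}_{\alpha,\beta}(t)u-\mathbb{F}_{\alpha,\beta}(s)u = \int_s^t \mathbb{F}_{\alpha,\beta}(\tau)\mathcal{A}u\,d\tau$ for $u\in D(\mathcal{A})$. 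The integral piece splits further into $\int_{s}^{t}\mathcal{K}_{\alpha}(t-\tau)\phi(\ldots)\,d\tau$ (bounded by $C|t-s|$ using $\zeta_{1},\zeta_{2}$ and the global bound on $\xi$) and $\int_{t_0}^{s}[\mathcal{K}_{\alpha}(t-\tau)-\mathcal{K}_{\alpha}(s-\tau)]\phi(\ldots)\,d\tau$, which I would handle by a change of variables together with the Lipschitz-in-both-variables bound on $\phi$ and hypothesis~(3) of the theorem statement (the Lipschitz comparability of $\xi\circ\sigma$ with $\xi$).

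Collecting these estimates yields an integral inequality of the form
\[
\|\xi(t)-\xi(s)\|_{C_{1-\gamma}} \leq C_{1}|t-s| + C_{2}\int_{t_{0}}^{s}\|\xi(\tau+(t-s))-\xi(\tau)\|_{C_{1-\gamma}}\,d\tau,
\]
and Gronwall's inequality then gives $\|\xi(t)-\xi(s)\|_{C_{1-\gamma}}\leq M|t-s|$, so $\xi$ is Lipschitz on $I$. Using hypothesis~(3) of the theorem, $t\mapsto\xi(\sigma(t))$ is also Lipschitz, and then the two-variable Lipschitz assumption on $\phi$ implies that $\widetilde{\phi}(t):=\phi(t,\xi(\sigma(t)))$ is Lipschitz from $I$ into $\Lambda$.

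Finally, since $\Lambda$ is reflexive, a Lipschitz function $I\to\Lambda$ is a.e. differentiable and its derivative lies in $L^{1}(I,\Lambda)$; in particular $\widetilde{\phi}\in L^{1}(I,\Lambda)$ with the requisite regularity. I would then view $\xi$ as the mild solution of the linear inhomogeneous Hilfer problem (\ref{1-2}) with forcing $\widetilde{\phi}$ and initial data $\xi_{0}-\varphi(t_{1},\ldots,t_{p},\xi(\cdot))\in D(\mathcal{A})$, and apply the strong-solution theorem stated above to conclude that $\xi$ is differentiable a.e., that $^{H}\mathbb{D}_{t_{0+}}^{\alpha,\beta}\xi\in L^{1}((t_{0},t_{0}+a],\Lambda)$, and that $\xi$ satisfies (\ref{3-4}) pointwise a.e. I expect the principal technical obstacle to be the delicate estimate of the kernel difference $\mathcal{K}_{\alpha}(t-\tau)-\mathcal{K}_{\alpha}(s-\tau)$ on $[t_{0},s]$, since $\mathcal{K}_{\alpha}$ carries a singular weight from the Hilfer calculus; setting up the Gronwall step so that this singularity is absorbed uniformly in $s,t$ is the heart of the argument.
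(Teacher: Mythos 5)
Your proposal follows essentially the same route as the paper: derive Lipschitz continuity of the mild solution from an integral inequality plus Gronwall, transfer that regularity to $t\mapsto\phi\left(t,\xi\left(\sigma\left(t\right)\right)\right)$ via hypotheses (2)--(3), and conclude by invoking the linear strong-solution theorem of Section 2 with forcing $\phi\left(\cdot,\xi\left(\sigma\left(\cdot\right)\right)\right)$ and initial datum $\xi_{0}-\varphi\left(t_{1},\ldots,t_{p},\xi\left(\cdot\right)\right)$. The differences are cosmetic (the paper estimates $\left\Vert \xi\left(t+h\right)-\xi\left(t\right)\right\Vert$ against the singular kernel $\left(t-s\right)^{\alpha-1}$ and uses the Mittag--Leffler form of Gronwall, whereas you set up a classical Gronwall), and your explicit appeal to the semigroup identity for $u\in D\left(\mathcal{A}\right)$ is actually more careful than the paper's corresponding step --- though note it silently requires $\xi_{0}\in D\left(\mathcal{A}\right)$, which the theorem's stated hypotheses do not provide and which the paper's own argument also needs but never addresses.
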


\begin{proof}
By Theorem 1, the problem Eq.(\ref{3-4}), admits a unique mild solution in $C_{1-\gamma}(I,\Lambda)$, once the conditions are satisfied. In order to obtain the existence and uniqueness of the strong solution, we will use the fact that the solution $\xi$, is mild for Eq.(\ref{3-4}) on $I$. Then, for any $t\in I$, we get
\begin{eqnarray*}
&&\left\Vert \xi\left( t+h\right) -\xi\left( t\right) \right\Vert _{C_{1-\gamma }}
\\
&&\leq \left[ \left\Vert \mathbb{F}_{\alpha ,\beta }\left( t+h-t_{0}\right)
\right\Vert _{C_{1-\gamma }}+\left\Vert \mathbb{F}_{\alpha ,\beta }\left(
t-t_{0}\right) \right\Vert _{C_{1-\gamma }}\right] \left\Vert
\xi_{0}\right\Vert _{C_{1-\gamma }} \\
&&+\left[ \left\Vert \mathbb{F}_{\alpha ,\beta }\left( t+h-t_{0}\right) \right\Vert
_{C_{1-\gamma }}+\left\Vert \mathbb{F}_{\alpha ,\beta }\left( t-t_{0}\right)
\right\Vert _{C_{1-\gamma }}\right] \left\Vert \varphi\left(
t_{1},t_{2},...,t_{p},\xi\left( \cdot \right) \right) \right\Vert
_{C_{1-\gamma }} \\
&&+\int_{t_{0}}^{t_{0}+h}\left\Vert \mathcal{K}_{\alpha }\left( t+h-s\right)
\right\Vert _{C_{1-\gamma }}\left\Vert \phi\left( s,\xi\left( \sigma \left(
s\right) \right) \right) -\phi\left( s,0\right) \right\Vert _{C_{1-\gamma }}ds
\\
&&+\int_{t_{0}}^{t_{0}+h}\left\Vert \mathcal{K}_{\alpha }\left( t+h-s\right)
\right\Vert _{C_{1-\gamma }}\left\Vert \phi\left( s,0\right) \right\Vert
_{C_{1-\gamma }}ds \\
&&+\int_{t_{0}}^{t}\left\Vert \mathcal{K}_{\alpha }\left( t-s\right) \right\Vert
_{C_{1-\gamma }}\left\Vert \phi\left( s+h,\xi\left( \sigma \left( s+h\right)
\right) \right) -\phi\left( s,u\left( \sigma \left( s\right) \right) \right)
\right\Vert _{C_{1-\gamma }}ds \\
&&\leq 2\zeta_{1}h\left\Vert \xi_{0}\right\Vert _{C_{1-\gamma }}+2\zeta_{1}\zeta_{3}h+\frac{\delta \zeta_{1}hr}{b}%
+\zeta_{1}\zeta_{2}b+\zeta_{1} \delta\int_{t_{0}}^{t}\left\Vert h\right\Vert _{C_{1-\gamma }}ds \\
&&+\zeta_{1} \delta\int_{t_{0}}^{t}\left\Vert u\left( \sigma \left( s+h\right) \right)
-\xi\left( \sigma \left( s\right) \right) \right\Vert _{C_{1-\gamma }}ds \\
&&\leq \theta+\zeta_{1}\delta R\widetilde{C}\int_{t_{0}}^{t}\left( t-s\right) ^{\alpha
-1}\left\Vert \xi\left( s+h\right) -\xi\left( s\right) \right\Vert _{C_{1-\gamma
}}ds,
\end{eqnarray*}
where $\theta:=2\zeta_{1}h\left\Vert \xi_{0}\right\Vert _{C_{1-\gamma }}+2\zeta_{1}\zeta_{3}h+\dfrac{\zeta_{1}\delta hr}{b}+\zeta_{1}\zeta_{2}b+\zeta_{1}\delta ha$.

By means of the Gronwall inequality (see\cite{gronwall}), we obtain
\begin{equation*}
\left\Vert \xi\left( t+h\right) -\xi\left( t\right) \right\Vert _{C_{1-\gamma}}\leq \theta\mathbb{E}_{\alpha }\left[ \zeta_{1}\delta R\widetilde{C}a^{\alpha }\Gamma \left( \alpha \right) \right], \text{for t}\in I.
\end{equation*}

Thus, $\xi$ is Lipschitz continuous on $I$. Note that, because $u$ is Lipschitz in $I$ and condition (iii), we have that $t\rightarrow \phi\left( t,\xi\left( t\right) \right) $ is Lipschitz continuous on $I$. In this sense, by means of Theorem 1 and Theorem 2, we have that the fractional Cauchy problem with its initial condition Eq.(\ref{3-4}), admits a unique solution in the interval $I$, which satisfies the integral equation
\begin{equation}
\left\{ 
\begin{array}{rll}
^{H}\mathbb{D}_{t_{0+}}^{\alpha ,\beta }\mu\left( t\right) +\mathcal{A}\mu\left( t\right)  & = & 
\phi\left( t,\xi\left( \sigma \left( t\right) \right) \right) ,\text{ }t\in \left[ t_{0},t_{0}+a\right]  \\ 
I_{t_{0+}}^{1-\gamma }\mu\left( t_{0}\right)  & = & \xi_{0}-\varphi\left(t_{1},t_{2},...,t_{p},\xi\left( \cdot \right) \right) 
\end{array}
\right.
\end{equation}
has a unique solution on $I$ satisfying the equation 
\begin{equation*}
\mu\left( t\right) =\mathbb{F}_{\alpha ,\beta }\left( t-t_{0}\right) \xi_{0}-\mathbb{F}_{\alpha ,\beta }\left( t-t_{0}\right) \varphi\left( t_{1},t_{2},...,t_{p},\xi\left( \cdot \right) \right) +\int_{t_{0}}^{t}\mathcal{K}_{\alpha }\left( t-s\right) \phi\left( s,\xi\left( \sigma \left( s\right) \right) \right) ds=\xi\left( t\right).
\end{equation*}

Thus, we conclude that, $\xi$ is a strong solution of fractional Cauchy problem {\rm Eq.(\ref{3-4})} in the interval $I$.
\end{proof}


\section*{Acknowledgment}
JVCS acknowledges the financial support of a PNPD-CAPES (process number nº88882.305834/2018-01) scholarship of the Postgraduate Program in Applied Mathematics of IMECC-Unicamp.


\end{document}